\newcommand{\field}[1]{\mathbb{#1}}
\newcommand{\Z}{\field{Z}}
\newcommand{\R}{\field{R}}
\def\({\left(}
\def\){\right)}
\DeclareMathOperator{\Li}{Li}
\newcommand{\coeff}{\operatorname{coeff}}
\theoremstyle{plain}
\newtheorem{theorem}{Theorem}
\newtheorem*{theorem*}{Theorem}
\newtheorem{lemma}[theorem]{Lemma}
\newtheorem{proposition}[theorem]{Proposition}
\newtheorem*{conjecture*}{Conjecture}
\theoremstyle{definition}
\theoremstyle{remark}
\newtheorem*{remark*}{Remark}
\numberwithin{theorem}{section} \numberwithin{equation}{section}
\begin{document}

\title{Asymptotic formulas for stacks and unimodal sequences}
\author{Kathrin Bringmann}
\address{Mathematical Institute\\University of
Cologne\\ Weyertal 86-90 \\ 50931 Cologne \\Germany}
\email{kbringma@math.uni-koeln.de}
\author{Karl Mahlburg}
\address{Department of Mathematics \\
Louisiana State University \\
Baton Rouge, LA 70802\\ U.S.A.}
\email{mahlburg@math.lsu.edu}

\subjclass[2000] {05A15, 05A16, 05A17, 11P81, 11P82}

\date{\today}

\keywords{unimodal sequences; generating functions; asymptotic formulas; integer partitions; Tauberian theorems}

\thanks{The research of the first author was supported by the Alfried Krupp Prize for
Young University Teachers of the Krupp Foundation.  The second author was supported by NSF Grant DMS-1201435.}

\begin{abstract}
In this paper, we  study  enumeration functions for unimodal sequences of positive integers, where the size of a sequence is the sum of its terms.  We survey known results for a number of natural variants of unimodal sequences, including Auluck's generalized Ferrer diagrams, Wright's stacks, and Andrews' convex compositions.  These results describe combinatorial properties, generating functions, and asymptotic formulas for the enumeration functions.  We also prove several new asymptotic results that fill in the notable missing cases from the literature, including an open problem in statistical mechanics due to Temperley.  Furthermore, we explain the combinatorial and asymptotic relationship between partitions, Andrews' Frobenius symbols, and stacks with summits.
\end{abstract}

\maketitle

\section{Introduction and statement of results}

One of the more ubiquitous concepts in enumerative combinatorics is unimodality, as a large number of common combinatorial functions have this property (see Stanley's survey articles \cite{Stan80,Stan89} for many examples and applications).  In particular, a {\it unimodal sequence of size $n$} is a sequence of positive integers that sum to $n$ such that the terms are monotonically increasing until a peak is reached, after which the terms are monotonically decreasing.  This means that the sequence can be written in terms of parts
\begin{equation*}
a_1, a_2, \dots, a_r, c, b_s, b_{s-1}, \dots, b_1,
\end{equation*}
that satisfy the inequalities
\begin{align}
\label{E:ac}
& 1 \leq a_1 \leq a_2 \leq \dots \leq a_r \leq c, \qquad \text{and} \\
\label{E:cb}
& c \geq b_s \geq b_{s-1} \geq \dots \geq b_1 \geq 1.
\end{align}
When writing examples of unimodal sequences, we  typically write the sequences without commas in order to save space. For example, $1244322$ is a unimodal sequence of size $18$, and all of the unimodal sequences of size $4$ are concisely listed as follows:
\begin{equation*}
4; \; 31; \; 13; \; 22; \; 211; \; 121; \; 112; \; 1111.
\end{equation*}

This simple combinatorial definition and its variants have also appeared in the literature under many other names and contexts, including ``stacks'' \cite{Aul51, Wri68, Wri71, Wri72}, ``convex compositions'' \cite{And12}, integer partitions \cite{And98, And11}, and in the study of ``saw-toothed'' crystal surfaces  in statistical mechanics \cite{Tem52}.  Furthermore, it is also appropriate to mention the closely related concept of ``concave compositions'', which were studied in \cite{And84, And11, And12, ARZ12, Rho12}, although we will not further discuss these sequences.

The purposes of this paper are twofold.  First, we concisely review previous work on enumeration functions related to stacks, unimodal sequences, and compositions, including the corresponding generating functions, and their asymptotic behavior when known.  Second, we provide new asymptotic formulas that successfully fill in the notable missing cases from the literature.  Along the way we describe some of the analytic techniques that were used in proving the previously known asymptotic formulas, including the theory of modular forms, Euler-MacLaurin summation, and Tauberian theorems; broadly speaking, these methods allow us to use the analytic properties of generating series in order to determine the asymptotic behavior of their coefficients.
In order to prove the new asymptotic formulas, we also introduce additional techniques such as the Constant Term Method and Saddle Point Method, which have been less widely used in this combinatorial setting.

Before presenting the definitions of the many variants of unimodal sequences, we first highlight a potential ambiguity in the enumeration of sequences that satisfy \eqref{E:ac} and \eqref{E:cb}.  In particular, if the largest part is not unique, then there are multiple choices for the ``summit'' $c$.  There are therefore two natural enumeration functions, as these choices may or may not be counted distinctly, and the majority of the enumeration functions thus occur in closely related pairs, corresponding to whether or not the summit is distinguished.  For the first type of enumeration function, we  canonically choose the summit $c$ to be the last instance of the largest part in the sequence.

We now describe the various enumeration functions for unimodal sequences, adopting Wright's terminology as much as possible -- we refer to all examples as ``stacks'' of various types, and we represent unimodal sequences visually by diagrams in which a sequence of blocks are stacked in columns.  The first definition is that of a {\it stack}, which is simply a unimodal sequence with $c > b_s$; this means that each distinct sequence is counted once, without regard to the multiplicity of the largest part.
Following Wright's notation in \cite{Wri68}, we denote the number of stacks of size $n$ by $s(n)$ (this function has also been called $Q(n)$ in the enumeration of ``Type B partitions'' found in \cite{Aul51}, and $u(n)$ in Section 2.5 of \cite{Stan11}).

Following the alternative method of enumeration, a {\it stack with summit} is defined to be a stack in which one of the largest parts is marked (this represents the possible different choices of $c$ in \eqref{E:ac} and \eqref{E:cb}).  In other words, we distinguish between stacks in which a different instance of the largest part is marked as the summit.  We write the enumeration function for stacks with summits of size $n$ as $ss(n)$; this is also found as $\sigma\sigma(n)$ in Section 3 of \cite{And84}, as $v(n)$ in the study of ``V-partitions'' in Section 2.5 of \cite{Stan11}, and as $X(n-1)$ in \cite{And12}, where Andrews' definition of ``convex compositions'' (which implicitly have a unique largest part) is equivalent to stacks with summits, up to a minor normalization.

For example, the stacks of size $4$ were listed previously,
and if we represent the marked parts as overlined,  then the stacks with summits of size $4$ are
\begin{equation*}
\overline{4}; \; \overline{3}1; \; 1\overline{3}; \; \overline{2}2; \; 2\overline{2}; \; \overline{2}11; \; 1\overline{2}1;
\; 11\overline{2}; \; \overline{1}111; \; 1\overline{1}11; \; 11\overline{1}1; \; 111\overline{1},
\end{equation*}
so $s(4) = 8$ and $ss(4) = 12.$  See Figures \ref{F:Stack} and \ref{F:StackSummit} for examples of the diagrams of stacks and stacks with summits.
The relevant generating functions, which first appeared in \cite{Aul51} and \cite{And84}, respectively, are
\begin{align}
\label{E:sgen}
\mathscr{S}(q) := \sum_{n \geq 0} s(n) q^n & = 1+\sum_{m \geq 1} \frac{q^m}{(q)_{m-1}^2 (1 - q^m)}, \\
\label{E:ssgen}
\mathscr{S}_s(q) := \sum_{n \geq 0} ss(n) q^n & = \sum_{m \geq 0} \frac{q^m}{(q)_{m}^2}.
\end{align}
Here we have used the standard notation for $q$-factorials, namely
$$
(a)_n = (a;q)_n := \prod_{j=0}^{n-1} \left(1 - aq^j\right)
$$
 for $n \geq 0$.    In order to understand these generating functions, observe that the $m$th term generates all stacks (respectively, stacks with summits) with largest part exactly $m$.

\begin{figure}[here]
\begin{center}
\includegraphics[width = 175 pt]{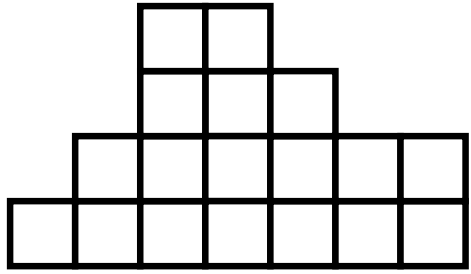}
\end{center}
\caption{\label{F:Stack}
Diagram for the stack $1244322$.}
\end{figure}

\begin{figure}[here]
\begin{center}
\includegraphics[width = 400 pt]{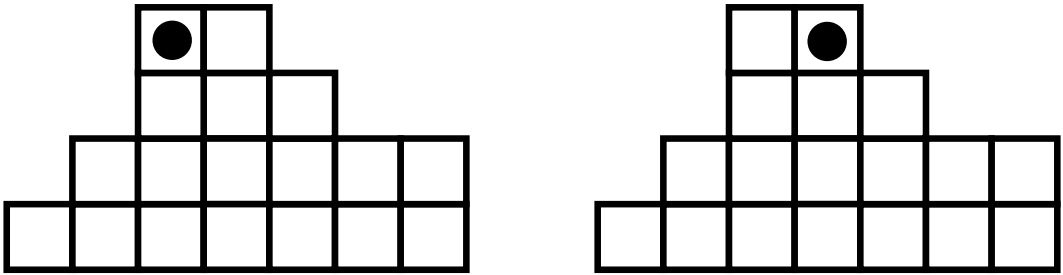}
\end{center}
\caption{\label{F:StackSummit}
Diagrams for the stacks with summits $12\overline{4}4322$ and $124\overline{4}322$, respectively, where the summits are distinguished by a mark at the top of the column.
}
\end{figure}

The next definition is that of a {\it receding stack}, which must satisfy \eqref{E:ac} and \eqref{E:cb} as well as the additional conditions
\begin{align}
\label{E:arec}
a_j & \geq a_{j+1} - 1 \quad \text{for } 1 \leq j \leq r-1, \\
\label{E:brec}
b_j & \leq b_{j+1} + 1 \quad \text{for } 1 \leq j \leq s-1.
\end{align}
In other words, the collections of $a_j$s and $b_j$s must both contain each part size from $1$ to $c-1$ at least once.  Also, as in the case of stacks above, we further require $c > b_s$ for this first type of enumeration function.  Again following Wright \cite{Wri71}, we denote the number of receding stacks of size $n$ by $g(n)$ (also written as $P(n)$ in the enumeration of ``Type A partitions'' in \cite{Aul51}).  Similarly, a {\it receding stack with summit} distinguishes one of the largest parts with a mark, and we denote the corresponding enumeration function by $gs(n)$.

For example, the receding stacks of size $4$ are
\begin{equation*}
121; \; 1111;
\end{equation*}
and the receding stacks with summits of size $4$ are
\begin{equation*}
1\overline{2}1;  \; \overline{1}111; \; 1\overline{1}11; \;
11\overline{1}1; \; 111\overline{1};
\end{equation*}
so $g(4) = 2$ and $gs(4) = 5.$
The generating functions for receding stacks (resp. with summits) are
\begin{align}
\label{E:ggen}
\mathscr{G}(q) := \sum_{n \geq 0} g(n) q^n & = 1 + \sum_{m \geq 1} \frac{q^{m^2}}{(q)_{m-1}^2 (1 - q^m)}, \\
\label{E:gsgen}
\mathscr{G}_s(q) := \sum_{n \geq 0} gs(n) q^n & = \sum_{m \geq 0} \frac{q^{m^2}}{(q)_{m}^2}.
\end{align}
The generating function for $g(n)$ was first given in \cite{Aul51}, but to our knowledge, receding stacks with summits have not previously been described.  This is likely due to the fact that \eqref{E:gsgen} is better known as the generating function for $p(n)$, Euler's integer partition function, which has been widely studied in its own right \cite{And98}.  We will describe the combinatorial relationship between partitions and receding stacks with summits in greater detail later in this paper.

Continuing with the definitions, a {\it shifted stack} is a sequence that satisfies \eqref{E:ac}, \eqref{E:cb}, and \eqref{E:arec}, with the additional condition $c > b_s$.  Although this definition might also naturally be called ``left-receding stacks'', the terminology is inspired by Auluck's  \cite{Aul51} and Wright's \cite{Wri71} observations that if each row is shifted left by half of a cell, then this is combinatorially equivalent to counting stacks of coins, or circular logs, with a single ``peak''.  See Figure \ref{F:Shifted} for an example of a shifted stack represented by both types of diagrams (cf. Figure 2 in Auluck's original work \cite{Aul51}).   The number of shifted stacks of size $n$ is denoted by $h(n)$ as in \cite{Wri71} (this function is also found as $R(n)$ in the enumeration of ``Type C partitions'' in \cite{Aul51}).  Similarly,  a {\it shifted stack with summit} is a shifted stack with a distinguished occurrence of the largest part, and the enumeration function is now denoted by $hs(n)$; these stacks previously appeared in Section 3 of \cite{And84}, where they were referred to as ``gradual stacks with summits'', with enumeration function $g\sigma(n)$.

\begin{figure}[here]
\begin{center}
\includegraphics[width = 300 pt]{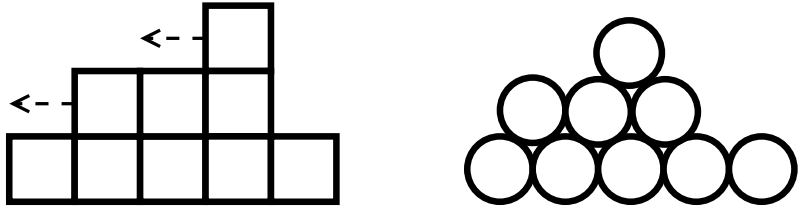}
\end{center}
\caption{\label{F:Shifted}
Two equivalent diagrams for the shifted stack $12231$.  The stack of coins diagram is obtained from the block diagram by successively shifting each row a half space to the left, and then replacing each block with a coin.}
\end{figure}

For example, the shifted stacks of size $4$ are
\begin{equation*}
121; \; 112; \; 1111;
\end{equation*}
and the shifted stacks with summits of size $4$ are
\begin{equation*}
1\overline{2}1; \; 11\overline{2}; \; \overline{1}111; \; 1\overline{1}11; \; 11\overline{1}1; \;
111\overline{1};
\end{equation*}
so $h(4) = 3$ and $hs(4) = 6.$
The generating functions for the two types of shifted stacks are first found in \cite{Aul51} and \cite{And84}, respectively, and are given by
\begin{align}
\label{E:hgen}
\mathscr{H}(q) := \sum_{n \geq 0} h(n) q^n & = 1 + \sum_{m \geq 1} \frac{q^{\frac{m(m+1)}{2}}}{(q)_{m-1}^2 (1 - q^m)}, \\
\label{E:hsgen}
\mathscr{H}_s(q) := \sum_{n \geq 0} hs(n) q^n & = \sum_{m \geq 0} \frac{q^{\frac{m(m+1)}{2}}}{(q)_{m}^2}.
\end{align}

The final variants that we discuss in this paper were introduced by Andrews in \cite{And12}, where he studied  ``strict convex compositions''.  We therefore define a {\it strict stack} to be a stack in which all of the inequalities in \eqref{E:ac} and \eqref{E:cb} are strict, and write the enumeration function as $d(n)$ (this appeared as $x_d(n)$ in \cite{And12}, and as the enumeration function $u^\ast(n)$ for ``strict unimodal sequences'' in \cite{Rho12}). Furthermore, a {\it semi-strict stack} (or ``left-strict'') requires strict inequalities only in \eqref{E:ac}, with the additional condition that the largest value is unique, so $c > b_s.$  We denote the enumeration function for semi-strict stacks by $dm(n)$ (this was denoted by $x_m(n)$ in \cite{And12}).   Note that the summits are already unique for strict and semi-strict stacks, so there are not enumeration functions of the second type.

For example, the strict stacks of size $4$ are
\begin{equation*}
4; \; 31; \; 13; \; 121;
\end{equation*}
and the semi-strict stacks of size $4$ are
\begin{equation*}
4; \; 31; \; 13; \; 211; \; 121;
\end{equation*}
so $d(4) = 4$ and $dm(4) = 5.$
The generating function for strict and semi-strict stacks are also found in \cite{And12}, and are given by
\begin{align}
\label{E:dgen}
\mathscr{D}(q) := \sum_{n \geq 0} d(n) q^n & = \sum_{m \geq 0} q^{m+1} \left(-q\right)_m^2, \\
\label{E:dmgen}
\mathscr{D}_m(q) := \sum_{n \geq 0} dm(n) q^n & = \sum_{m \geq 0} \frac{(-q)_m q^{m+1}}{(q)_m}.
\end{align}

All of the previously known asymptotic results for stacks are listed in Table \ref{Tab:stack}.  It is important to point out that in many cases the enumeration function is known to much greater precision than its main asymptotic term; this includes the full asymptotic expansion for $g(n)$ and $h(n)$ \cite{Wri71}, an exact formula for $gs(n)$ \cite{Rademacher}, and an asymptotic series with polynomial error for $d(n)$ \cite{Rho12}.

The asterisks in the second row indicate that in the case of $ss(n)$ the asymptotic formula is not explicitly stated in either of the references, but can be obtained by combining Stanley and Wright's results.  For completeness, the details and further context are discussed later in Section \ref{S:summit} of this paper.

\begin{table}[h]
\begin{center}
\begin{tabular}{|l||c|c|c|c|c|} \hline
\quad Objects \quad & \quad Functions \quad & \quad $C$ \quad & \quad $\alpha$ \quad & \quad $\beta$ \quad & \quad References \quad \\ \hline \hline

Stacks & $s(n)$  & $2^{-3} 3^{-\frac{3}{4}}$ & $\frac{5}{4}$ & $\frac{4}{3}$ & Auluck \cite{Aul51} \\ \hline

\: \dots w/ summits & $ss(n)$ & $2^{-3} {3^{-\frac{3}{4}}}^\ast$ & $\frac{5}{4}^\ast$ & $\frac{4}{3}^\ast$ & $\text{Stanley}^\ast$ \cite{Stan11}, $\text{Wright}^\ast$ \cite{Wri71}  \\ \hline

Receding stacks & $g(n)$ & $2^{-3} 3^{-\frac{1}{2}}$ & $1$ & $\frac{2}{3}$ & Auluck \cite{Aul51} \\ \hline

\: \dots w/ summits & $gs(n)$ & $2^{-2} 3^{-\frac{1}{2}}$ & $1$ & $\frac{2}{3}$ & Hardy-Ramanujan \cite{HR} \\ \hline

Shifted stacks & $h(n)$ & ? & ? & $\frac{4}{5}$ & Wright \cite{Wri72} \\ \hline

\: \dots w/ summits & $hs(n)$ & ? & ? & ? & --- \\ \hline

Strict stacks & $d(n)$ & $2^{-\frac{13}{4}} 3^{-\frac{1}{4}}$ & $\frac{3}{4}$ & $\frac{2}{3}$ & Rhoades \cite{Rho12} \\ \hline

Semi-strict stacks & $dm(n)$ & ? & ? & ? & --- \\ \hline

\end{tabular}
\end{center}
\label{Tab:stack}
\caption{Previous asymptotic formulas for stack enumeration functions, where the main asymptotic term is written in the form $C n^{-\alpha} e^{\pi \sqrt{\beta n}}$ as $n \to \infty$.  See Section \ref{S:summit} for more details on how the formula for $ss(n)$ follows from \cite{Stan11} and \cite{Wri71}.}
\end{table}

Our new results fill in the missing values and incomplete rows of Table \ref{Tab:stack}.  These missing cases are of longstanding interest, as they represent some of the earliest examples of unimodal enumeration problems, and have inspired much of the subsequent work in the area.  Indeed, shifted stacks were first introduced in Temperley's original study of boundary cases for rectangular arrays in thermal equilibrium \cite{Tem52}.  Auluck then investigated $h(n)$ at Temperley's request \cite{Aul51}, and proved that the exponential order lay in the bounded range $\frac{2}{3} \leq \beta \leq \frac{5}{6}$. The study was subsequently taken up by Wright \cite{Wri72}, who improved the asymptotic analysis by finding the exact exponent, showing that
\begin{equation}
\label{E:logh}
\log \left( h(n) \right)\sim \pi \sqrt{\frac{4}{5} n} \qquad \text{as } n \to \infty.
\end{equation}
Our first result gives precise asymptotic formula for shifted stacks and shifted stacks with summits, which is a significant improvement over \eqref{E:logh}.  We use the standard notation $\phi := \frac{1 + \sqrt{5}}{2}$ for the Golden Ratio.
\begin{theorem}
\label{T:h}
As $n \to \infty$, we have the asymptotic formulae
\begin{align*}
h(n) & \sim \frac{\phi^{-1}}{2\sqrt{2} 5^{\frac{3}{4}} n} e^{2\pi \sqrt{\frac{n}{5}}}, \\
hs(n) & \sim \phi \cdot h(n).
\end{align*}
\end{theorem}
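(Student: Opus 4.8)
The plan is to extract the behavior of the generating functions $\mathscr{H}(q)$ and $\mathscr{H}_s(q)$ as $q=e^{-t}\to 1^{-}$ by a saddle--point (Laplace) analysis, and then to convert this into asymptotics for the coefficients via an Ingham--type Tauberian theorem. Unlike $\mathscr{G}(q)$ and $\mathscr{G}_s(q)$, the series \eqref{E:hgen} and \eqref{E:hsgen} are not essentially modular, so the modular--transformation route used for receding stacks is unavailable; instead we exploit the fact that both are sums of the shape $\sum_m q^{Q(m)}/(q)_m^2$, in which the quadratic exponent $q^{Q(m)}$ and the growth of $1/(q)_m^2$ balance at a single critical index $m\asymp 1/t$.

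First I would analyze $\mathscr{H}_s(e^{-t})=\sum_{m\geq 0}q^{m(m+1)/2}/(q)_m^2$. Using the classical asymptotic $(e^{-t})_\infty\sim\sqrt{2\pi/t}\,e^{-\pi^2/(6t)}$ together with Euler--Maclaurin summation, one obtains, uniformly for $m$ in the relevant band $m\asymp 1/t$,
\[
\log\frac{1}{(q)_m^2}=\frac{\pi^2}{3t}+\log\frac{t}{2\pi}-\frac{2}{t}\,\Li_2\!\left(e^{-mt}\right)-\log\!\left(1-e^{-mt}\right)+o(1).
\]
Writing $u=mt$ and passing from the sum over $m$ to an integral, this yields
\[
\mathscr{H}_s(e^{-t})\sim\frac{1}{2\pi}\int_0^\infty\frac{e^{-u/2}}{1-e^{-u}}\,e^{\Phi(u)/t}\,du,\qquad\Phi(u):=\frac{\pi^2}{3}-\frac{u^2}{2}-2\,\Li_2\!\left(e^{-u}\right).
\]
Since $\Phi'(u)=-u-2\log(1-e^{-u})$, the (unique interior) maximum of $\Phi$ occurs where $e^{-u/2}=1-e^{-u}$, i.e.\ at $u_0=2\log\phi$, because $\phi^{-1}=(\sqrt5-1)/2$ is the positive root of $y^2+y-1=0$. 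One then computes $e^{-u_0}=\phi^{-2}$ and $1-e^{-u_0}=\phi^{-1}$, so the amplitude at the saddle equals $e^{-u_0/2}/(1-e^{-u_0})=1$, while $\Phi''(u_0)=-1-2e^{-u_0}/(1-e^{-u_0})=-(1+2\phi^{-1})=-\sqrt5$. Finally, Landen's dilogarithm evaluation $\Li_2(\phi^{-2})=\pi^2/15-\log^2\phi$ gives $\Phi(u_0)=\pi^2/3-2\pi^2/15=\pi^2/5$, so Laplace's method produces
\[
\mathscr{H}_s(e^{-t})\sim\frac{1}{2\pi}\sqrt{\frac{2\pi t}{\sqrt5}}\;e^{\pi^2/(5t)}=\frac{\sqrt t}{\sqrt{2\pi}\,5^{1/4}}\;e^{\pi^2/(5t)}.
\]

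For $\mathscr{H}(q)$ I would use the identity $(q)_{m-1}^2(1-q^m)=(q)_m^2/(1-q^m)$ to rewrite
\[
\mathscr{H}(q)=\mathscr{H}_s(q)-\sum_{m\geq 1}\frac{q^{m(m+1)/2+m}}{(q)_m^2}=:\mathscr{H}_s(q)-R(q).
\]
The series $R$ has exactly the same saddle--point structure as $\mathscr{H}_s$ --- the only difference is an extra factor $e^{-u}$ in the amplitude --- so the same computation gives $R(e^{-t})\sim e^{-u_0}\mathscr{H}_s(e^{-t})=\phi^{-2}\mathscr{H}_s(e^{-t})$, whence
\[
\mathscr{H}(e^{-t})\sim\left(1-\phi^{-2}\right)\mathscr{H}_s(e^{-t})=\phi^{-1}\,\mathscr{H}_s(e^{-t}),
\]
using $1-\phi^{-2}=\phi-1=\phi^{-1}$. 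Since $h(n)$ and $hs(n)$ are nonnegative and nondecreasing (monotonicity follows from an elementary injection, e.g.\ appending a part equal to $1$ at the right-hand end of a shifted stack), an Ingham--type Tauberian theorem applies with parameters $\gamma=\tfrac12$ and $A=\pi^2/5$, yielding $hs(n)\sim\bigl(2\sqrt2\,5^{3/4}\,n\bigr)^{-1}e^{2\pi\sqrt{n/5}}$. The proportionality $\mathscr{H}(e^{-t})\sim\phi^{-1}\mathscr{H}_s(e^{-t})$ then gives $h(n)\sim\phi^{-1}hs(n)$, which is precisely the asserted formula for $h(n)$, together with $hs(n)\sim\phi\cdot h(n)$.

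The main obstacle is making the first two steps rigorous: one has to control the Euler--Maclaurin error in $\log(1/(q)_m^2)$ \emph{uniformly} over all $m$ --- separately bounding the small-$m$ terms (where $1/(q)_m^2$ is only polynomially large in $1/t$, hence negligible against $e^{\pi^2/(5t)}$) and justifying the approximation in the critical band $m\asymp 1/t$ --- and then verifying that the sum over $m$ concentrates in a window of width $\asymp 1/\sqrt t$ around $m_0\sim(2\log\phi)/t$, so that Laplace's method delivers not merely the exponential rate $e^{\pi^2/(5t)}$ but also the exact constant $(\sqrt{2\pi}\,5^{1/4})^{-1}$ and the power $t^{1/2}$; these are exactly what produce the clean closed form of the theorem. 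A minor additional point is to quote the Tauberian theorem in the precise form returning $a(n)$ itself rather than $\sum_{k\leq n}a(k)$, which is where the monotonicity of $h$ and $hs$ enters.
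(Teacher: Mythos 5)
Your proposal is correct, and the numerical details check out: the saddle $u_0=2\log\phi$, the amplitude value $1$, $\Phi(u_0)=\pi^2/5$, $\Phi''(u_0)=-\sqrt5$, the resulting expansion $\mathscr{H}_s(e^{-t})\sim t^{1/2}e^{\pi^2/(5t)}/(\sqrt{2\pi}\,5^{1/4})$, and the Ingham step all agree with what the paper obtains. However, your route is genuinely different from the paper's. The paper does not attack the $m$-sum directly; it first uses the Constant Term Method, writing $\mathscr{H}(q)$ (and $\mathscr{H}_s(q)$) as the $x^0$-coefficient of $(-x^{-1})_\infty(xq)_\infty^{-1}$ via the $q$-binomial theorem, then applies the Jacobi triple product and the modular inversion of the theta function, and performs the saddle-point analysis on a contour integral in the auxiliary variable $u$ (with critical point $e^{2\pi i v}=\phi^{-1}$, the same golden-ratio saddle as yours); the uniformity issues are then largely absorbed into the theta inversion and the uniform expansion \eqref{TaylorLi} of the quantum dilogarithm, so the Gaussian evaluation is clean. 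Your approach instead applies Laplace's method to the Eulerian sum itself, expanding $\log(1/(q)_m^2)$ by Euler--Maclaurin/quantum dilogarithm and converting the sum over $m$ to an integral; this is more elementary (no theta functions or triple product), but it shifts the technical burden to exactly the points you flag: uniform error control in the critical band $m\asymp 1/t$, negligibility of small and large $m$, and concentration in a window of width $\asymp t^{-1/2}$ --- none of which is automatic and all of which must be carried out to claim the constant and the power of $t$, not just the exponential order (this is precisely the gap between Wright's $\log h(n)\sim\pi\sqrt{4n/5}$ and Theorem \ref{T:h}). Your treatment of the pair is also different and rather slick: the identity $(q)_{m-1}^2(1-q^m)=(q)_m^2/(1-q^m)$ gives $\mathscr{H}=\mathscr{H}_s-R$ and the relative factor $1-\phi^{-2}=\phi^{-1}$ from the extra $e^{-u_0}$ in the amplitude, whereas the paper computes both series by the constant-term integral and sees the factor $\phi$ arise from replacing $e^{\pi i u}$ by $e^{-\pi i u}$ at the critical point. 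Finally, your use of Ingham's theorem (with monotonicity of $h$ and $hs$ from the append-a-$1$ injection) matches the paper's final step; note only that the passage from $\mathscr{H}(e^{-t})\sim\phi^{-1}\mathscr{H}_s(e^{-t})$ to $h(n)\sim\phi^{-1}hs(n)$ should be phrased as applying the Tauberian theorem to each series separately (with $\lambda$ scaled by $\phi^{-1}$), as proportionality of the generating functions alone does not transfer to coefficients without it.
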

\begin{remark*}
As pointed out to us by R. Rhoades, Ramanujan's last letter to Hardy amazingly includes the generating function for shifted stacks with summits.  Although he did not discuss the coefficients, nor their combinatorial significance, Ramanujan did provide the asymptotic expansion for the generating function (as was often the case in Ramanujan's work, he did not write his results in a strictly rigorous manner).  See \cite{Wat36} and the further discussion in Section \ref{S:shifted:summit} of this paper for additional details.  Combined with the techniques discussed in Section \ref{S:Tauberian}, Ramanujan's result could also be used to prove the asymptotic formula for $hs(n)$.  However, we provide an additional proof of this case in Section \ref{S:shifted}, as our present techniques apply equally well to both formulas in Theorem \ref{T:h}.
\end{remark*}

Our next result does not directly appear in Table \ref{Tab:stack}, but is inspired by a closer investigation of the relationship between receding stacks with summits and integer partitions.  We will review Andrews' definition of ``Frobenius symbols'' in Section \ref{S:partns:Frob}, but for the purposes of stating our result we simply denote the number of Frobenius symbols of size $n$ with no zeros in the top row by $F_\phi(n).$   We also define the notation $F_0(n)$ for the number of Frobenius symbols of size $n$ that {\it do} contain a zero in the top row.  Since there are exactly $p(n)$ Frobenius symbols of size $n$ (again, see Section \ref{S:partns:Frob} for definitions), we have
\begin{equation}
\label{E:F+F=p}
F_\phi(n) + F_0(n) = p(n).
\end{equation}

The following theorem describes the asymptotic relationship between these two functions.
\begin{theorem}
\label{T:Frob}
As $n \to \infty$,
\begin{equation*}
F_\phi(n) \sim F_0(n) \sim \frac{1}{8 \sqrt{3} n} e^{\pi \sqrt{\frac{2}{3}n}}.
\end{equation*}
\end{theorem}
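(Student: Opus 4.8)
The plan is to extract exact generating functions for $F_\phi(n)$ and $F_0(n)$ from the Frobenius symbol correspondence, to determine the behaviour of these series as $q\to1^-$, and finally to invoke a Tauberian theorem to convert this into asymptotics for the coefficients. The key structural input is that (as reviewed in Section~\ref{S:partns:Frob}) a partition of $n$ with Durfee square of side $d$ is encoded by a Frobenius symbol whose rows are strictly decreasing sequences $a_1>\dots>a_d\ge0$ (the arm lengths $\lambda_i-i$) and $b_1>\dots>b_d\ge0$ (the leg lengths), with $n=d+\sum_i a_i+\sum_i b_i$, and that the top row contains no zero exactly when $a_d\ge1$, i.e. $\lambda_d\ge d+1$. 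Using the standard generating functions $q^{\binom{d+1}{2}}/(q)_d$ and $q^{\binom{d}{2}}/(q)_d$ for strictly decreasing sequences of length $d$ with smallest entry $\ge1$ and $\ge0$ respectively, and summing over $d$, one obtains
\[
\mathscr{F}_\phi(q):=\sum_{n\ge0}F_\phi(n)q^n=\sum_{d\ge0}\frac{q^{d^2+d}}{(q)_d^2},\qquad \mathscr{F}_0(q):=\sum_{n\ge0}F_0(n)q^n=\sum_{d\ge1}\frac{q^{d^2}}{(q)_{d-1}(q)_d}=\sum_{d\ge0}\bigl(1-q^d\bigr)\frac{q^{d^2}}{(q)_d^2}.
\]
By the Durfee square identity $\sum_{d\ge0}q^{d^2}/(q)_d^2=1/(q)_\infty$ (cf. \eqref{E:gsgen} and \cite{And98}) this recovers $\mathscr{F}_\phi+\mathscr{F}_0=1/(q)_\infty=\sum_n p(n)q^n$, i.e. \eqref{E:F+F=p}; so it suffices to treat one of the two functions, and the crux is to show that $\mathscr{F}_0(q)\sim\tfrac12\,/(q)_\infty$ as $q\to1^-$.

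To prove this I would use a concentration (Saddle Point / Laplace) argument. Put $q=e^{-t}$ and $a_d=a_d(q):=q^{d^2}/(q)_d^2\ge0$, so that $1/(q)_\infty=\sum_{d\ge0}a_d$ and $\mathscr{F}_0(q)=\sum_{d\ge0}(1-q^d)a_d$. Euler--Maclaurin summation applied to $\log(q)_d=\sum_{j=1}^d\log(1-e^{-jt})$ (the only delicate point being the integrable logarithmic singularity at $j=0$) yields, uniformly in $d\ge1$,
\[
\log a_d=\frac1t\,g(dt)+O\!\left(\log\tfrac1t\right),\qquad g(x):=-x^2-2\int_0^x\log\!\bigl(1-e^{-u}\bigr)\,du.
\]
Since $g''(x)=-2-2e^{-x}/(1-e^{-x})<0$, the function $g$ is strictly concave on $(0,\infty)$ with a unique maximum at the solution of $e^{-x}=1-e^{-x}$, namely $x=\log2$, and there $g(\log2)=\tfrac{\pi^2}{6}$ (using $\Li_2(\tfrac12)=\tfrac{\pi^2}{12}-\tfrac12\log^2 2$). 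Hence the mass of $\sum_d a_d$ concentrates on the window $|dt-\log2|\le\delta$, where $q^d=e^{-dt}\in[\tfrac12 e^{-\delta},\tfrac12 e^{\delta}]$, while the total contribution of the $d$ with $|dt-\log2|>\delta$ is $O\bigl(\mathrm{poly}(\tfrac1t)\,e^{(\pi^2/6-c(\delta))/t}\bigr)$ for some $c(\delta)>0$ and hence negligible compared with $1/(q)_\infty$, whose logarithm is asymptotic to $\pi^2/(6t)$. Letting $\delta\to0$ gives $\sum_d q^d a_d\sim\tfrac12\sum_d a_d$, so $\mathscr{F}_\phi(q)=\sum_d q^d a_d\sim\tfrac12\,/(q)_\infty$ and $\mathscr{F}_0(q)=1/(q)_\infty-\mathscr{F}_\phi(q)\sim\tfrac12\,/(q)_\infty$.

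Finally, both $F_0(n)$ and $F_\phi(n)$ are non-decreasing in $n$: adjoining a single box to the largest part of a partition leaves the Durfee side $d$ unchanged and preserves the sign of $\lambda_d-d$, which gives an injection from the relevant partitions of $n$ into those of $n+1$. Therefore Ingham's Tauberian theorem (see Section~\ref{S:Tauberian}) applies; since that theorem is linear in the leading constant and transforms $1/(q)_\infty$ into the Hardy--Ramanujan asymptotic $p(n)\sim\frac{1}{4\sqrt3\,n}e^{\pi\sqrt{2n/3}}$ \cite{HR}, the relations $\mathscr{F}_0(q)\sim\mathscr{F}_\phi(q)\sim\tfrac12/(q)_\infty$ yield
\[
F_\phi(n)\sim F_0(n)\sim\tfrac12\,p(n)\sim\frac{1}{8\sqrt3\,n}\,e^{\pi\sqrt{\frac23 n}}.
\]
I expect the main obstacle to be the concentration step: establishing the uniform Euler--Maclaurin estimate for $\log(q)_d$ over the whole range of $d$ (the regime $dt\asymp1$ being the one that matters) and then pinning the constant down to exactly $\tfrac12$ via the identity $e^{-\log2}=\tfrac12$ at the dominant Durfee size $d\approx(\log2)/t$; by contrast, once $\mathscr{F}_0(q)\sim\tfrac12/(q)_\infty$ is in hand, the Tauberian transfer and the monotonicity check are routine.
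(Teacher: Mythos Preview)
Your approach is correct but takes a genuinely different route from the paper. The paper's proof is essentially a two-line reduction to known results: it recognises your identity $\mathscr{F}_0(q)=\sum_{d\ge1}q^{d^2}/((q)_{d-1}(q)_d)$ as the generating function $\mathscr{G}(q)$ for receding stacks (so $F_0(n)=g(n)$), and then simply quotes Auluck's asymptotic $g(n)\sim\frac{1}{8\sqrt3\,n}e^{\pi\sqrt{2n/3}}$ together with the Hardy--Ramanujan formula for $p(n)$; subtraction gives $F_\phi(n)=p(n)-g(n)\sim\tfrac12\,p(n)$. You instead rederive the asymptotic from scratch, via a Laplace/saddle-point concentration argument on the Durfee-square sum (locating the dominant $d$ at $dt\approx\log2$, where $q^d\approx\tfrac12$) and then Ingham's Tauberian theorem. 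Your route is more self-contained and makes transparent \emph{why} the constant is exactly $\tfrac12$; the paper's route is far shorter because it outsources precisely that analytic work to \cite{Aul51}.

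One small slip: your monotonicity injection (``add a box to the largest part'') does not preserve the $F_0$-class when $d=1$ and $\lambda_1=1$, i.e.\ for the partition $1^n$, since then $\lambda_d=\lambda_1$ itself changes and $\lambda_1-1$ goes from $0$ to $1$. The fix is immediate: either send $1^n\mapsto1^{n+1}$ separately (this target is not hit otherwise, as every image under ``add a box to the largest part'' has a part $\ge2$), or simply note that each term of $\mathscr{F}_0(q)=\sum_{d\ge1}q^{d^2}/((q)_{d-1}(q)_d)$ already carries a factor $1/(1-q)$, so Lemma~\ref{L:mono} applies directly.
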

\begin{remark*}
In fact, the bijections in Section \ref{S:partns} further prove that $g(n) = F_0(n)$.  This means that Theorem \ref{T:Frob} equivalently states that roughly half of all partitions (according to the uniform measure on partitions up to a given size) correspond to a Frobenius symbol that does not contain a zero in the top row, just as the ``average'' receding stack has two copies of its largest part (since $gs(n) \sim 2 g(n)$).
\end{remark*}

Our final result fills in the last row of Table \ref{Tab:stack} by giving a formula for semi-strict stacks.  Among the known results in Table \ref{Tab:stack}, the case of strict stacks was perhaps the most difficult.  The analysis for most of the other rows relied on some combination of combinatorial arguments, the transformation theory of (modular) theta functions, and/or the use of Euler-MacLaurin summation for asymptotic expansions.  However, Andrews \cite{And12} showed that $\mathscr{D}(q)$ can be expressed in terms of modular forms and Ramanujan's  famous ``mock theta functions'', which were only recently placed into the modern framework of real-analytic automorphic forms thanks to Zwegers' seminal Ph.D. thesis \cite{Zw02} (see also \cite{Zag06}).  Rhoades \cite{Rho12} then proved the asymptotic series expansion for $d(n)$ by using an extended version of the Hardy-Ramanujan Circle Method that the present authors originally developed for the asymptotic analysis of such non-modular functions in \cite{BM11}.

Indeed, it is a very challenging problem to determine the automorphic properties of a generic hypergeometric $q$-series.  Zagier's work on Nahm's conjecture in \cite{Zag06} provides a classification of modular hypergeometric $q$-series written in so-called ``Eulerian'' form, and S. Zwegers pointed out to us that this criterion shows that the shifted stack generating functions, $\mathscr{H}(q)$ and $\mathscr{H}_s(q)$, are not modular forms either.  See Section \ref{S:shifted:shifted} for additional details.

In contrast, the case of semi-strict stacks is more similar to some of the earlier rows, as $\mathscr{D}_m(q)$ can be expressed in terms of modular forms.  We prove the final missing asymptotic formula using modular transformations and Ingham's Tauberian Theorem.
\begin{theorem}
\label{T:dm}
As $n \to \infty$, we have the asymptotic formula
\begin{equation*}
dm(n) \sim \frac{1}{16n} e^{\pi \sqrt{n}}.
\end{equation*}
\end{theorem}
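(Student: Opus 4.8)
The plan is to recognize $\mathscr{D}_m(q)$ as essentially an eta-quotient, determine its radial asymptotics from the modular transformation of $\eta$, and then invoke Ingham's Tauberian Theorem in exactly the way one treats $p(n)$. First I would write $\mathscr{D}_m(q) = q\sum_{m\ge 0}\frac{(-q)_m}{(q)_m}q^m$ and apply Cauchy's $q$-binomial theorem $\sum_{m\ge 0}\frac{(a)_m}{(q)_m}z^m = \frac{(az)_\infty}{(z)_\infty}$ with $a=-q$ and $z=q$, which gives $\mathscr{D}_m(q)=\frac{q\,(-q^2;q)_\infty}{(q)_\infty}$. The elementary identities $(-q^2;q)_\infty=(-q)_\infty/(1+q)$ and $(-q)_\infty(q)_\infty=(q^2;q^2)_\infty$ then turn this into
\[
\mathscr{D}_m(q)=\frac{q}{1+q}\cdot\frac{(q^2;q^2)_\infty}{(q)_\infty^2}
=\frac{q}{1+q}\cdot\frac{\eta(2\tau)}{\eta(\tau)^2},
\]
where $q=e^{2\pi i\tau}$ and the fractional $q$-powers in $(q)_\infty=q^{-1/24}\eta(\tau)$, $(q^2;q^2)_\infty=q^{-1/12}\eta(2\tau)$ cancel. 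So, up to the rational factor $q/(1+q)$, $\mathscr{D}_m$ is an eta-quotient of weight $-\tfrac12$, which is exactly the situation that makes the Tauberian route available (this is the point of contrast with $\mathscr{H}(q)$ and $\mathscr{H}_s(q)$).

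Next I would extract the behaviour as $q=e^{-t}\to 1^-$, so $\tau=\tfrac{it}{2\pi}$ with $t\to 0^+$. Using $\eta(-1/\tau)=\sqrt{-i\tau}\,\eta(\tau)$ together with $\eta(w)=e^{\pi i w/12}\bigl(1+O(e^{2\pi i w})\bigr)$ as $\operatorname{Im}(w)\to\infty$, one obtains $\eta(\tau)\sim (-i\tau)^{-1/2}e^{-\pi i/(12\tau)}$ and $\eta(2\tau)\sim (-2i\tau)^{-1/2}e^{-\pi i/(24\tau)}$, with errors exponentially small in $1/t$. Hence
\[
\frac{\eta(2\tau)}{\eta(\tau)^2}\sim\sqrt{\frac{-i\tau}{2}}\;e^{\frac{\pi i}{8\tau}}
=\frac{\sqrt t}{2\sqrt\pi}\;e^{\frac{\pi^2}{4t}},
\]
and since $\tfrac{q}{1+q}\to\tfrac12$ this gives $\mathscr{D}_m\!\left(e^{-t}\right)\sim\dfrac{\sqrt t}{4\sqrt\pi}\,e^{\pi^2/(4t)}$ as $t\to 0^+$.

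For the Tauberian step, Ingham's theorem requires $\{dm(n)\}$ to be non-negative and non-decreasing. Non-negativity is clear, and $dm(n)\le dm(n+1)$ follows from the injection that increases by one the (automatically unique) summit $c$ of a semi-strict stack of size $n$: this preserves $a_r<c<c+1$ and $b_s<c<c+1$, so the image is again semi-strict, and it is reversed by locating the summit and decreasing it. Ingham's theorem then says that if $\sum_{n\ge0}a_nq^n$ has such coefficients and $\sum a_n e^{-nt}\sim\lambda\,t^{\alpha}\,e^{\beta/t}$ as $t\to0^+$ with $\lambda,\beta>0$, then $a_n\sim\frac{\lambda}{2\sqrt\pi}\,\beta^{\alpha/2+1/4}\,n^{-\alpha/2-3/4}\,e^{2\sqrt{\beta n}}$. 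Substituting $\lambda=\tfrac1{4\sqrt\pi}$, $\alpha=\tfrac12$, $\beta=\tfrac{\pi^2}{4}$ gives $dm(n)\sim\frac{1}{8\pi}\cdot\frac{\pi}{2}\cdot\frac1n\,e^{\pi\sqrt n}=\frac{1}{16n}e^{\pi\sqrt n}$.

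None of the individual steps is genuinely hard; the only real care is the bookkeeping of constants in the second step, where a good sanity check is to rerun the same argument on $1/(q)_\infty$ and recover the Hardy--Ramanujan formula $p(n)\sim\frac{1}{4\sqrt3\,n}e^{\pi\sqrt{2n/3}}$, and to confirm that the exponentially small corrections in the $\eta$-expansions are harmless for the hypotheses of Ingham's theorem. Unlike the shifted-stack case, here there is no non-modularity obstruction, so no Constant Term or Saddle Point analysis is needed.
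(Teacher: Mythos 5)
Your proposal is correct and follows essentially the same route as the paper: the product formula $\mathscr{D}_m(q)=\frac{q\,(q^2;q^2)_\infty}{(1+q)(q)_\infty^2}$ (which the paper quotes from Andrews and you re-derive via the $q$-binomial theorem), the eta-inversion asymptotics $\mathscr{D}_m\left(e^{-t}\right)\sim\frac{\sqrt{t}}{4\sqrt{\pi}}e^{\frac{\pi^2}{4t}}$, and Ingham's Tauberian Theorem with $\lambda=\frac{1}{4\sqrt{\pi}}$, $\alpha=\frac12$, $A=\frac{\pi^2}{4}$. The only cosmetic difference is that you verify monotonicity of $dm(n)$ by a direct combinatorial injection (raising the unique summit by one), whereas the paper extracts a factor $\frac{1}{1-q}$ and invokes its Lemma \ref{L:mono}; both are valid.
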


The remainder of the paper is structured as follows.  In Section \ref{S:partns} we define the Frobenius symbol of a partition, give a combinatorial bijection between partitions and receding stacks with summits, and finally prove Theorem \ref{T:Frob} by relating Frobenius symbols with no zeros to receding stacks.  In Section \ref{S:Tauberian} we recall Ingham's Tauberian Theorem and then use it to prove Theorem \ref{T:dm}.  We next address the case of shifted stacks, using the Constant Term Method and Saddle Point Method to prove Theorem \ref{T:h} in Section \ref{S:shifted}.  Finally, we conclude in Section \ref{S:summit} by outlining the asymptotic analysis of stacks with summits and provide further discussion of Stanley's and Wright's works.

\section*{Acknowledgments}

The authors thank Rob Rhoades and Sander Zwegers for helpful comments regarding historical works relevant to shifted stacks.

\section{Receding stacks, partitions, and Frobenius symbols}
\label{S:partns}

This section describes the combinatorial properties and asymptotic behavior of receding stacks, partitions, and Frobenius symbols.
In particular, we present a simple bijection between partitions and receding stacks with summits, and then  describe the relationship between these stacks and Frobenius symbols.
Indeed, our map essentially gives a ``conjugated'' version of the Frobenius symbol (in the sense that certain row and columns are interchanged), and it therefore comes as no surprise that our study also includes asymptotic results relevant to Andrews' study of Frobenius symbols with no zeros in the top row from \cite{And11}.

\subsection{Frobenius symbols}
\label{S:partns:Frob}

In \cite{And84F}, Andrews defined a {\it Frobenius symbol} as two equal-length sequences of decreasing nonnegative integers, usually written in two rows as
\begin{equation*}
\begin{pmatrix}
\alpha_1 & \alpha_2 & \dots & \alpha_k \\
\beta_1 & \beta_2 & \dots & \beta_k
\end{pmatrix}.
\end{equation*}
The {\it size} of such a symbol is defined to be the sum $k + \alpha_1 + \dots + \alpha_k + \beta_1 + \dots + \beta_k$.

The Frobenius symbols of size $n$ are in bijective correspondence with the partitions of size $n$.  The {\it Ferrer's diagram} of a partition $\lambda_1 + \dots + \lambda_m$ begins with a row of $\lambda_1$ blocks, which is then followed below by (left-justified) rows of $\lambda_2, \dots, \lambda_m$ blocks.  Given such a diagram, the corresponding Frobenius symbol is constructed by first setting $k$ to be the length of the main (southeast) diagnoal.  The entry $\alpha_j$ is then the length of the $j$th row that lies to the right of the diagonal, while $\beta_j$ is the length of the $j$th column that is below the diagonal.  For example, the partition $4+4+3+3+1$ corresponds to the Frobenius symbol
$$
\begin{pmatrix}
3 & 2 & 0 \\
4 & 2 & 1
\end{pmatrix}.
$$

\begin{remark*}
\label{R:Frob0}
It is straightforward to see that a Frobenius symbol with a zero in the top row corresponds to a partition in which the $k$-th largest part is exactly $k$ for some $k \geq 1$.
\end{remark*}

\subsection{A bijection between partitions and receding stacks with summits}

We now describe a bijection between partitions and receding stacks with summits that provides a combinatorial proof of the following result.  See Figure \ref{F:PartitionBij} for an example of the bijection.
\begin{proposition}
\label{P:gs=p}
For all $n \geq 0$, the number of partitions of size $n$ is equal to the number of receding stacks with summits of size $n$, so that
\begin{equation*}
gs(n) = p(n).
\end{equation*}
\end{proposition}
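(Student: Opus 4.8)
The plan is to produce an explicit size‑preserving bijection and to match it with the Durfee square decomposition of partitions, which is exactly the combinatorial content of \eqref{E:gsgen}. Recall that a partition $\lambda$ of $n$ has a Durfee square of some side $m$ (the largest $m$ with $\lambda_m\ge m$); deleting this $m\times m$ square leaves a partition $P$ with at most $m$ parts to the right of it and a partition $Q$ with all parts $\le m$ below it, and this is a bijection between partitions of $n$ with Durfee side $m$ and pairs $(P,Q)$ as above with $n=m^2+|P|+|Q|$. In particular $q^{m^2}/(q)_m^2$ is the part of $\sum p(n)q^n$ coming from Durfee side $m$, so $gs(n)=p(n)$ already follows formally from \eqref{E:gsgen}; the point is to see it bijectively.

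First I would record the structure of receding stacks with summits. A receding stack with summit of size $n$ whose marked part equals $m$ is determined by the triple $(m,L,R)$, where $L$ is the multiset of parts to the left of the marked summit and $R$ the multiset to its right. By \eqref{E:ac} and \eqref{E:cb}, $L$ and $R$ are partitions with all parts $\le m$; by \eqref{E:arec}, \eqref{E:brec} together with the requirement that every part size from $1$ to $m-1$ occurs among the $a_j$ and among the $b_j$, the set of distinct part sizes occurring in $L$, and likewise in $R$, is an initial segment of $\{1,2,3,\dots\}$ containing $\{1,\dots,m-1\}$, hence equals $\{1,\dots,m-1\}$ or $\{1,\dots,m\}$. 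Conversely, any two partitions $L,R$ with parts $\le m$ that each contain at least one part of every size $1,\dots,m-1$ arise this way (mark $m$, place $L$ weakly increasing to the left and $R$ weakly decreasing to the right), with $n=m+|L|+|R|$.

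The bijection is then as follows. Given $(m,L,R)$, delete one part of size $i$ from $L$ for each $i=1,\dots,m-1$ to obtain an arbitrary partition $\widetilde L$ with parts $\le m$, and likewise replace $R$ by $\widetilde R$; since $|L|=|\widetilde L|+\binom m2$ and $|R|=|\widetilde R|+\binom m2$, we get $n=m^2+|\widetilde L|+|\widetilde R|$. Let $\widetilde L'$ be the conjugate partition of $\widetilde L$, which has at most $m$ parts. Now build the partition consisting of an $m\times m$ square with $\widetilde L'$ attached to the right and $\widetilde R$ attached below; by the Durfee decomposition this is a partition of $n$ with Durfee side $m$. The inverse reads $m$ and the pair $(P,Q)$ off the Durfee decomposition, sets $\widetilde L=P'$ and $\widetilde R=Q$, and then inserts one part of each size $1,\dots,m-1$ back into $\widetilde L$ and into $\widetilde R$ to recover $L$ and $R$. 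Both maps are well-defined, mutually inverse, and preserve $n$, giving $gs(n)=p(n)$.

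The one step that genuinely needs care — and that I would write out in full — is the claim in the second paragraph that the receding conditions are exactly equivalent to $L$ and $R$ being partitions with parts $\le m$ whose distinct‑part‑size sets are initial segments of $\{1,2,3,\dots\}$ containing $\{1,\dots,m-1\}$; granting this, the operation ``delete (resp. insert) one part of each size $1,\dots,m-1$'' is transparently a bijection between such partitions and unrestricted partitions with parts $\le m$, and the remainder is bookkeeping. I would also dispatch the degenerate cases ($m=1$, where $\{1,\dots,m-1\}=\varnothing$, and $L$ or $R$ empty), and note, in line with the remark after the proposition, that interchanging the roles of $L$ and $R$ and comparing with the construction in Section \ref{S:partns:Frob} exhibits this map as a conjugated Frobenius symbol, which is what later yields $g(n)=F_0(n)$.
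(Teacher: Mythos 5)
Your proof is correct, but it is a genuinely different bijection from the one in the paper. The paper's proof reads the Ferrers diagram along diagonals: the summit is the length of the main diagonal, the $a_j$'s are the diagonal rows below it and the $b_j$'s the diagonal columns to its right, so the map is in effect a conjugated Frobenius symbol; this is exactly what makes the later identity $F_0(n)=g(n)$ (and the remark following Theorem \ref{T:Frob}) immediate. You instead bijectivize the generating function \eqref{E:gsgen} term by term via the Durfee square: characterize a receding stack with summit $m$ by the pair of partitions $(L,R)$ flanking the mark, strip a staircase $1,2,\dots,m-1$ from each side, conjugate one component, and reassemble around an $m\times m$ square. Both maps send the summit value to the Durfee side (equivalently, the length of the main diagonal), but they are not the same map --- on $4+4+3+3+1$ the paper's bijection produces $1123\overline{3}221$ while yours produces $122\overline{3}3211$. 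What your route buys is transparency of the Eulerian form $\sum_m q^{m^2}/(q)_m^2$ and a reduction to entirely standard Durfee-square bookkeeping; what the paper's route buys is directness (no staircase surgery or conjugation) and the structural link to Frobenius symbols that the rest of Section \ref{S:partns} exploits. Your flagged point of care is the right one: the equivalence of \eqref{E:arec}--\eqref{E:brec} (with the ``every size $1,\dots,m-1$ on each side'' reading, stated relative to the canonical last occurrence of the maximum) with your description of $L$ and $R$ relative to the \emph{marked} part needs the small observation that parts of size $<m$ lie outside the block of maximal parts, so the two splittings carry the same sub-$m$ multisets; with that and the degenerate cases $m\in\{0,1\}$ written out, the argument is complete.
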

\begin{remark*}
This equality can also be read directly from the generating function \eqref{E:gsgen}, but the combinatorial arguments that follow provide a more refined understanding of the relationship between partitions and receding stacks with summits.
\end{remark*}
\begin{figure}[here]
\begin{center}
\includegraphics[width = 425 pt]{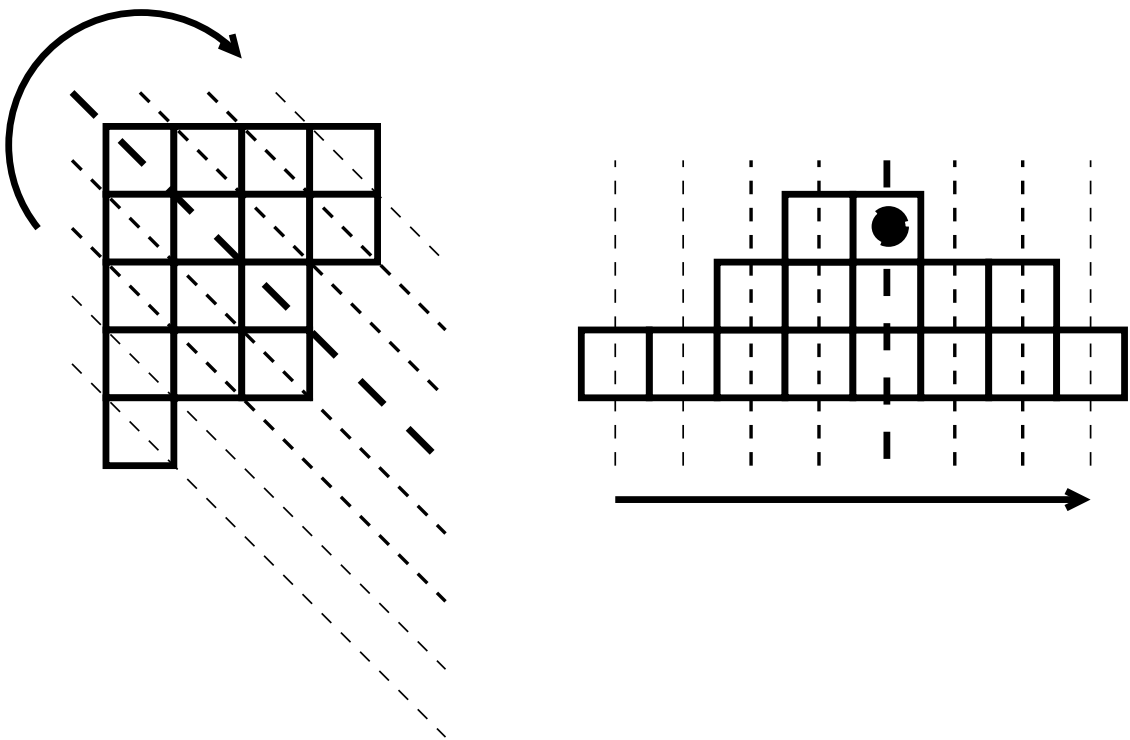}
\end{center}
\caption{\label{F:PartitionBij}
An illustration of the bijection that maps the partition $4+4+3+3+1$ to the receding stack with summit $1123\overline{3}221.$
Starting from the lower-left corner in the partition and proceeding clockwise along its boundary, the unimodal sequence is constructed by reading the blocks of the partition along diagonals.  The summit is marked on the main diagonal of the partition, which is also distinguished by the thickest dashed line in the figure.}
\end{figure}
\begin{proof}
As in the construction of the Frobenius symbol corresponding to a partition, we begin by considering separately the portion of the Ferrer's diagram to the right and below the main diagonal.  The summit $c$ of the receding stack is set to be the length of the main diagonal.  The $a_js$ are determined by reading the portion of the diagram below the main diagonal as diagonal ``rows'', beginning with the lower-left cell, which automatically gives $a_1 = 1$.  After reaching the summit $c$ along the main diagonal, the $b_js$ are then found by reading diagonal ``columns'' to the right of the main diagonal.  Proceeding along these columns, the lengths give the successive values of $b_s, b_{s-1}, \dots,$ ending with the upper-right cell, which gives $b_1 = 1$.

It is routine to verify that the receding stack conditions \eqref{E:ac}, \eqref{E:cb}, \eqref{E:arec}, and \eqref{E:brec} are always satisfied.  One needs only keep in mind that the cells in the Ferrer's diagram of a partition are fully left- and top-justified.  It is also clear that this map is a bijection, as the procedure is easily reversible, giving a well-defined inverse map from receding stacks with summits to partitions.
\end{proof}

\subsection{Frobenius symbols with no zeros}

In this section we prove Theorem \ref{T:Frob}.  As first defined in the introductory section, $F_\phi(n)$ denotes the number of Frobenius symbols of size $n$ with no zeros in the top row, and $F_0(n)$ enumerates the number of Frobenius symbols of size $n$ that do contain a zero in the top row (in particular, the final entry in the top row is the only one that can be zero).
\begin{proof}[Proof of Theorem \ref{T:Frob}]
Andrews showed in Lemma 12 (and the proof of Theorem 4) of \cite{And11} that the generating function for Frobenius symbols without zeros in the top row is
\begin{equation*}
\mathscr{F}_\phi(q) := \sum_{n \geq 0} F_\phi(n) q^n = \sum_{m \geq 0} \frac{q^{m^2 + m}}{(q)_m^2}.
\end{equation*}
The simple identity
\begin{equation*}
\sum_{m \geq 0} \frac{q^{m^2}}{(q)_m^2} = \sum_{m \geq 0} \frac{q^{m^2 + m}}{(q)_m^2} +
\left(1 + \sum_{m \geq 1} \frac{q^{m^2}}{(q)_{m-1} (q)_m}\right)
\end{equation*}
can be rewritten as
\begin{equation}
\label{E:G=F+G}
\mathscr{G}_s(q) = \mathscr{F}_\phi(q) + \mathscr{G}(q).
\end{equation}
Along with \eqref{E:gsgen} and \eqref{E:F+F=p}, this implies that
\begin{equation*}
\sum_{n \geq 0} F_0(n) q^n = \mathscr{G}(q).
\end{equation*}
We therefore have $F_0(n) = g(n)$ for all $n \geq 0$.  Furthermore, comparing coefficients in \eqref{E:G=F+G}, we also have
\begin{equation}
\label{E:gs=F+g}
gs(n) = F_\phi(n) + g(n).
\end{equation}
As alluded to in Table \ref{Tab:stack}, Auluck proved an asymptotic formula for $g(n)$; in particular, equation (15) of \cite{Aul51} states that
\begin{equation*}
g(n) \sim \frac{1}{8 \sqrt{3} n} e^{\pi \sqrt{\frac{2}{3}n}}.
\end{equation*}
Table \ref{Tab:stack} also contains Hardy and Ramanujan's famous asymptotic formula for the partition function,
\begin{equation*}
p(n)=
gs(n) \sim \frac{1}{4 \sqrt{3} n} e^{\pi \sqrt{\frac{2}{3}n}}.
\end{equation*}
Combined with \eqref{E:gs=F+g}, this completes the proof.
\end{proof}

\begin{remark*}
The discussion at the end of Section \ref{S:partns:Frob} and the proof of Proposition \ref{P:gs=p} also provide a combinatorial argument for the fact that $F_0(n) = g(n)$.  In particular, if the Frobenius symbol of a partition has a zero in its top row, then the main diagonal of its Ferrers diagram is longer than any diagonal to the right.  Using the bijection from Proposition \ref{P:gs=p}, the summit $c$ in the corresponding receding stack is therefore canonically larger than all of the $b_j$s.  The partition in Figure \ref{F:PartitionBij} is a relevant example, as the corresponding Frobenius symbol in this case was $\left(\begin{smallmatrix}
3 & 2 & 0 \\
4 & 2 & 1
\end{smallmatrix}\right).$
\end{remark*}

\section{Ingham's Tauberian Theorem and semi-strict stacks}
\label{S:Tauberian}

We begin this section by recalling a fundamental tool for analyzing the coefficients of combinatorial generating functions.  We then apply it to the case of semi-strict stacks, proving Theorem \ref{T:dm}.

\subsection{Ingham's Tauberian Theorem}
We require the following Tauberian Theorem from \cite{Ing41}, which allows us to describe the asymptotic behavior of the coefficients of a power series using the analytic nature of the series itself.
\begin{theorem}[Ingham]\label{T:Ingham}
Let $f(q)=\sum_{n\geq 0}a(n) q^n$ be a power series with weakly increasing nonnegative
coefficients and radius of convergence equal to $1$. If there are constants $A>0$,
$\lambda, \alpha\in\R$ such that
\[
f \left(e^{-\varepsilon} \right)  \sim
\lambda   \varepsilon^\alpha
\exp\left(\frac{A}{\varepsilon}\right)
\]
as $\varepsilon \to 0^+$, then
\[
a(n) \sim\frac{\lambda}{2\sqrt{\pi}}\,
\frac{A^{\frac{\alpha}{2}+\frac14}}{n^{\frac{\alpha}{2}+\frac34}}\,
\exp\left(2\sqrt{An}\right)
\]
as $n\to\infty$.
\end{theorem}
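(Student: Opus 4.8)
The plan is to prove the theorem as a genuine \emph{Tauberian} statement, keeping in mind that the hypothesis controls $f$ only along the positive real axis (through $F(\varepsilon) := f(e^{-\varepsilon})$ as $\varepsilon \to 0^+$). Because there is no information about $f$ off the real axis, one cannot run a Circle Method or a local central limit theorem directly; the monotonicity of the coefficients must instead be exploited as the Tauberian side condition. I would first record the heuristic that fixes the target constant. Introducing the saddle point $\varepsilon_n := \sqrt{A/n}$ of the exponent $A/\varepsilon + n\varepsilon$, whose value there is $2\sqrt{An}$, accounts for the factor $e^{2\sqrt{An}}$. Thinking of the Gibbs-type weights $a(m)e^{-m\varepsilon_n}/F(\varepsilon_n)$ as a probability measure, its mean is $n$ and its variance is $\sigma^2 \sim 2A^{-1/2}n^{3/2}$ (the second $\varepsilon$-derivative of $\log F$), so that the ``density at the mean'' is $\tfrac{1}{\sqrt{2\pi}\,\sigma} = \tfrac{A^{1/4}}{2\sqrt{\pi}\, n^{3/4}}$. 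Multiplying by $F(\varepsilon_n)e^{n\varepsilon_n} \sim \lambda A^{\alpha/2} n^{-\alpha/2} e^{2\sqrt{An}}$ reproduces exactly $\tfrac{\lambda}{2\sqrt{\pi}} A^{\alpha/2+1/4} n^{-\alpha/2-3/4} e^{2\sqrt{An}}$, confirming the claimed formula and pinpointing where every constant comes from.

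The first rigorous step is to establish the exponential order using only nonnegativity and monotonicity. For the upper bound, since $a(m)\ge a(n)$ for $m\ge n$,
\[
F(\varepsilon) \;\ge\; a(n)\sum_{m\ge n} e^{-m\varepsilon} \;=\; \frac{a(n)\,e^{-n\varepsilon}}{1-e^{-\varepsilon}},
\]
whence $a(n) \le (1-e^{-\varepsilon})\,e^{n\varepsilon}F(\varepsilon)$ for all $\varepsilon>0$; optimizing at $\varepsilon=\varepsilon_n$ yields the correct exponential factor $e^{2\sqrt{An}}$ together with a polynomial factor that is too large by $n^{1/4}$. A complementary lower bound of the same exponential order follows by truncating $F(\varepsilon)$ to $m\le n$ and again using monotonicity. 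Together these give the Legendre-duality relation $\log a(n) \sim 2\sqrt{An}$ and localize the remaining analysis to the saddle window.

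The substantive step is upgrading the polynomial power and constant. Here I would analyze the weighted sum $F(\varepsilon_n)=\sum_m a(m)e^{-m\varepsilon_n}$ locally, showing it concentrates on $|m-n|\lesssim \sigma \asymp n^{3/4}$, with the Gaussian normalization $\tfrac{1}{\sqrt{2\pi}\,\sigma}$ furnishing precisely the missing $\tfrac{1}{2\sqrt\pi}$ and the extra $n^{-1/4}$. A natural auxiliary is the partial-sum function $A(N):=\sum_{n\le N}a(n)$, which is \emph{convex} because its increments $a(n)$ are nondecreasing; one expects $A(N)\sim \tfrac{\lambda}{2\sqrt\pi}A^{\alpha/2-1/4}N^{-\alpha/2-1/4}e^{2\sqrt{AN}}$. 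I would take particular care in passing from $A$ back to $a$: naive differencing of the relation $A\sim G$ is \emph{insufficient}, since at every admissible window scale $k$ the error in $A$ swamps the increment $G'(n)k$, so the transfer must use the convexity of $A$ together with a genuine local comparison rather than a single finite difference.

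The main obstacle, and the entire Tauberian content, is extracting the Gaussian normalization constant $\tfrac{1}{2\sqrt\pi}$ from monotonicity alone. Over the concentration window the coefficient $a(m)$ itself varies super-polynomially (by a factor $e^{\sqrt{A}\,n^{1/4}}$), so the Gaussian shape lives in the \emph{product} $a(m)e^{-m\varepsilon}$ and one cannot simply freeze $a(m)$ at its value $a(n)$. The delicate task is therefore to convert the single-parameter real-axis asymptotic into this local normalization using only the one-sided monotone control, while keeping every error term uniform across the $O(n^{3/4})$ saddle window. To accomplish this I would follow Ingham's block-comparison (smoothing) argument, partitioning the window into short blocks on which monotonicity pins the coefficients between consecutive values, summing the resulting two-sided bounds, and recognizing the sum as a Riemann sum for the Gaussian integral, so that the constant $\tfrac{1}{2\sqrt\pi}$ emerges in the limit.
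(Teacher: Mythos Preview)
The paper does not prove this theorem. It is quoted verbatim as a known result from Ingham's 1941 paper \cite{Ing41} and then applied as a black box to the generating functions $\mathscr{D}_m$, $\mathscr{H}$, and $\mathscr{H}_s$. So there is no ``paper's own proof'' to compare your proposal against.

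That said, your outline is a faithful sketch of how Ingham's argument actually proceeds: the saddle choice $\varepsilon_n=\sqrt{A/n}$, the crude two-sided exponential bounds from monotonicity, passage to the summatory function $A(N)$ via Abel/Laplace duality, and then a smoothing-and-differencing step exploiting the convexity of $A(N)$ to recover the precise polynomial power and the constant $\tfrac{1}{2\sqrt{\pi}}$. You have correctly identified the genuine difficulty, namely that over the saddle window of width $\asymp n^{3/4}$ the coefficients $a(m)$ vary by a factor $\exp(c\,n^{1/4})$, so one cannot freeze $a(m)$ and must instead compare blockwise. Where your proposal remains a plan rather than a proof is exactly this last step: you invoke ``Ingham's block-comparison argument'' and ``recognize the sum as a Riemann sum for the Gaussian integral'' without executing the estimates. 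In Ingham's actual proof this is done not by a direct Gaussian Riemann sum but via a Hardy--Littlewood style Tauberian argument for the Laplace transform of $A(N)$, using two nearby values of $\varepsilon$ to squeeze $A(N)$ and then differencing; the monotonicity of $a(n)$ (convexity of $A$) is what makes the differencing lossless. If you intend to supply a self-contained proof, that is the part requiring real work; otherwise, for the purposes of this paper, a citation to \cite{Ing41} is all that is expected.
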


In order to apply this theorem throughout the paper, we repeatedly use the following simple observation that guarantees increasing coefficients.
\begin{lemma}
\label{L:mono}
Suppose that $A(q) = \sum_{n \geq 0} a(n) q^n$ has non-negative coefficients $a(n).$  Then the series
\begin{equation*}
\displaystyle \sum_{n \geq 0} a'(n) q^n := \frac{1}{1-q} A(q)
\end{equation*}
has monotonically increasing non-negative coefficients.
\end{lemma}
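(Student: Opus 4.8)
The plan is to expand the geometric series $\frac{1}{1-q} = \sum_{k \geq 0} q^k$ (valid as a formal power series, or on $|q| < 1$), and then read off the coefficients of the Cauchy product with $A(q)$. First I would write
\begin{equation*}
\frac{1}{1-q} A(q) = \left( \sum_{k \geq 0} q^k \right) \left( \sum_{m \geq 0} a(m) q^m \right) = \sum_{n \geq 0} \left( \sum_{m=0}^{n} a(m) \right) q^n,
\end{equation*}
so that $a'(n) = \sum_{m=0}^{n} a(m)$ is the $n$-th partial sum of the $a(m)$.

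Next I would observe that since each $a(m) \geq 0$ by hypothesis, every partial sum $a'(n)$ is a sum of non-negative terms and hence non-negative. Finally, for monotonicity I would compute the successive difference $a'(n) - a'(n-1) = a(n) \geq 0$ for $n \geq 1$ (with $a'(0) = a(0) \geq 0$), which shows the sequence $\{a'(n)\}$ is weakly increasing.

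There is no real obstacle here; the only point requiring a word of care is justifying the rearrangement of the double sum into a single power series, which is immediate in the ring of formal power series and also valid analytically since all terms are non-negative. This lemma is stated precisely so that it can be combined with Ingham's Tauberian Theorem (Theorem \ref{T:Ingham}), whose hypotheses require weakly increasing non-negative coefficients: one multiplies a combinatorial generating function with non-negative coefficients by $\frac{1}{1-q}$, applies the asymptotic analysis to the product, and then recovers the asymptotics of the original coefficients $a(n) = a'(n) - a'(n-1)$ at the end.
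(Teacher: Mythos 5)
Your proof is correct: the Cauchy-product computation $a'(n) = \sum_{m=0}^{n} a(m)$, together with $a'(n) - a'(n-1) = a(n) \geq 0$, is exactly the standard argument, and it is the one the paper implicitly relies on (the lemma is stated there without proof as a ``simple observation''). Your closing remark about how the lemma feeds into Ingham's Tauberian Theorem also matches the way it is used in the paper.
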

\begin{remark*}
Lemma \ref{L:mono} (and therefore Theorem \ref{T:Ingham}) are easily adapted to any series $A(q)$ whose coefficients are strictly positive after some fixed index $n'$.
\end{remark*}

\subsection{Asymptotic behavior of semi-strict stacks}

In order to apply Ingham's Tauberian Theorem to the case of semi-strict stacks, we must first understand the asymptotic behavior of the generating function $\mathscr{D}_m(q)$ (cf. \eqref{E:dmgen}).  This is best accomplished not with the expression in \eqref{E:dmgen}, but by rather using an alternative expression given by Andrews in Theorem 1, equation (1.7) of \cite{And12} (see also (1.13)).  He proved that
\begin{equation}
\label{E:Dmeta}
\mathscr{D}_m(q) = \frac{q \left(q^2; q^2\right)_\infty}{(1+q) (q)_\infty^2},
\end{equation}
which is a modular form, up to a rational expression in $q$.
If we let $q = e^{-\varepsilon}$ as in Theorem \ref{T:Ingham}, then
the modular inversion formula for Dedekind's eta-function (page 121,
Proposition 14 of \cite{Ko84}) implies that as $\varepsilon \to 0^+$
\begin{equation}
\label{E:qinfty}
\left(e^{-\varepsilon}; e^{-\varepsilon} \right)_{\infty} \sim \sqrt{\frac{2\pi}{\varepsilon}}e^{-\frac{\pi^2}{6\varepsilon}}.
\end{equation}

\begin{proof}[Proof of Theorem \ref{T:dm}]
Applying \eqref{E:qinfty} to \eqref{E:Dmeta}, we find that
\begin{equation}
\mathscr{D}_m(q) \sim \frac{1}{4 \sqrt{\pi}} \varepsilon^{\frac{1}{2}} e^{\frac{\pi^2}{4 \varepsilon}}
\end{equation}
as $\varepsilon \to 0^+.$  Furthermore, Lemma \ref{L:mono} applied to \eqref{E:Dmeta} (or to the original generating function \eqref{E:dmgen}) 
 shows that the coefficients are monotonically increasing.  Theorem \ref{T:Ingham} now directly gives the theorem statement.
\end{proof}

\section{Asymptotic behavior of shifted stacks}
\label{S:shifted}

In this section we use a variety of tools from complex analysis and number theory to prove Theorem \ref{T:h}.  In particular, we begin with the Constant Term Method, inserting an additional variable $x$ in order to identify \eqref{E:hgen} as the constant term of the product of more recognizable number-theoretic functions.  We then use Cauchy's Theorem to recover this coefficient as a contour integral; the asymptotic behavior of the integral is obtained through the Saddle Point Method.  Finally, the asymptotic behavior of $h(n)$ is recovered from an application of Ingham's Tauberian Theorem.  The asymptotic analysis of shifted stacks with summits proceeds very similarly.  It should be noted that this approach is a standard (if not widely known) technique for determining the asymptotic behavior of hypergeometric $q$-series written in Eulerian form (cf. Proposition 5 in \cite{Zag07}).  This technique can also be applied in settings where there is not an Eulerian $q$-series, such as the double summation studied in \cite{BHMV12}.

It is interesting to note that the Constant Term Method already appears in Auluck's paper \cite{Aul51}, where he used this approach to derive alternative expressions for each of the generating functions \eqref{E:sgen}, \eqref{E:ggen}, and \eqref{E:hgen}.  However, in his asymptotic analysis of shifted stacks, he then relied on somewhat oblique comparisons to the (known) asymptotic behavior of integer partitions; the same is also true of Wright's work in \cite{Wri72}.  Our main contribution is that we immediately apply analytic approximation techniques (such as modular transformations and the Saddle Point Method) after using the Constant Term Method, rather than seeking to find exact expressions for generating functions.

\subsection{Asymptotic analysis of shifted stacks and the proof of Theorem \ref{T:h}}
\label{S:shifted:shifted}

In order to study shifted stacks and prove the first asymptotic formula in Theorem \ref{T:h}, we begin by writing
\begin{align}
\label{E:CConst}
\mathscr{H} (q) &= 1+ \coeff \left[x^0\right] \left( \sum_{r\geq 0} \frac{x^{-r} q^{\frac{r^2-r}{2}}}{(q)_r} \sum_{m\geq 1} \frac{x^m q^{m}}{(q)_{m-1}} \right)  \\
 &= 1 + \coeff \left[x^0\right] \left(qx \left( -x^{-1}\right)_{\infty}  \left(xq\right)_\infty^{-1} \right), \notag
\end{align}
where we have used the following two instances of the $q$-binomial theorem (cf. Theorem 2.1 in \cite{And98}):
\begin{align*}
(x)_\infty &= \sum_{m\geq 0}\frac{(-1)^m q^{\frac{m(m-1)}{2}}x^m}{(q)_m}, \\
\frac{1}{(x)_\infty} &= \sum_{m\geq 0}\frac{x^m}{(q)_m}.
\end{align*}

In order to further rewrite \eqref{E:CConst}, we recall Jacobi's Triple Product formula (Theorem 14.6 in \cite{Apo76}), which states that
\begin{equation*}
(-x^{-1})_\infty (-xq)_\infty (q)_\infty =
- q^{-\frac{1}{8}} x^{-\frac{1}{2}} \vartheta\left(u + \frac{1}{2}; \frac{i \varepsilon}{2 \pi} \right).
\end{equation*}
Here we write $x = e^{2 \pi i u}$, $q = e^{- \varepsilon}$ with $\varepsilon>0$, and the {\it Jacobi theta function} is given by
\begin{align}
\label{E:theta}
\vartheta(u; \tau) & := \sum_{n \in \frac12+\Z}
e^{\pi i n^2\tau   + 2 \pi i n \left(u + \frac{1}{2}\right)}.
\end{align}
We  also use the {\it quantum dilogarithm} \cite{Zag07}, which is given by
\begin{align*}
\Li_2(x; q) & := -\log (x)_\infty = \sum_{m \ge 1} \frac{x^m}{m(1-q^m)}.
\end{align*}
We note that the quantum dilogarithm is a $q$-deformation of the ordinary \textit{dilogarithm}, defined for $|x|<1$ by
$$
\Li_2(x) := \sum_{m \geq 1}
\frac{x^m}{m^2}.
$$
To be more precise, for $|x|<1$ we have
$$
\lim_{\varepsilon \to 0^+} \varepsilon \Li_2\left(x; e^{-\varepsilon} \right)= \Li_2(x).
$$
Furthermore, if $|x|<1$ and $B\geq 0,$ then the Laurent expansion of the
quantum dilogarithm begins with the terms
\begin{align}
\label{TaylorLi}
\Li_2\left(e^{-B\varepsilon} x; e^{-\varepsilon}\right)
&=\sum_{m\geq 1}\frac{x^m e^{-Bm\varepsilon}}{m\left(1-e^{-\varepsilon m}\right)}
=\frac{1}{\varepsilon}\sum_{m\geq 1}\frac{x^m}{m^2}\left(1-m\varepsilon\left(B-\tfrac12\right)+
O\left(\varepsilon^2\right)\right)\\
&=\frac{1}{\varepsilon} \Li_2(x)+\left(B-\tfrac12\right)\log(1-x)+O(\varepsilon), \notag
\end{align}
where the series converges uniformly in $x$ as $\varepsilon \to 0^+$.

Using the above definitions, we can therefore rewrite \eqref{E:CConst} as
\begin{equation*}
\mathscr{H} (q)  = 1 - \frac{q^{\frac{7}{8}}}{ \left( q \right)_{\infty}} \mathscr{A} (q),
\end{equation*}
with
\begin{equation}
\label{E:AConst}
\mathscr{A} (q) := \coeff \left[x^0\right] \left( \vartheta \left( u+\frac12 ; \frac{i\varepsilon}{2\pi} \right) \exp \left( \pi i u + \text{Li}_2 \left(x^2 q^2 ; q^2 \right) \right) \right).
\end{equation}
The modular inversion formula for the Jacobi theta function (cf. Proposition 1.3 (8) in \cite{Zw02}) implies that
\begin{equation}
\label{E:thetaInv}
\vartheta \left( u+\frac12 ; \frac{i\varepsilon}{2\pi } \right)
=i \sqrt{ \frac{2 \pi}{\varepsilon}  } e^{- \frac{2\pi^2 }{\varepsilon}\left( u + \frac12 \right)^2}
\vartheta \left(\frac{2 \pi\left( u+\frac12  \right)}{i\varepsilon} ; \frac{2\pi i}{\varepsilon} \right).
\end{equation}
We use Cauchy's Theorem to recover the constant coefficient from \eqref{E:AConst} as a contour integral, and further simplify by plugging in \eqref{E:theta} and  \eqref{E:thetaInv}, obtaining
\begin{align}
\label{E:AInt}
\mathscr{A} (q) &= -\sqrt{\frac{2\pi}{\varepsilon}} \int_{[0,1] + iC}
\sum_{m\in\Z} \exp\left(-\frac{2\pi^2}{\varepsilon}  (u+m)^2 + \pi i (u+m) + \text{Li}_2 \left(x^{2}q^2 ; q^2 \right) \right) du \\
&=-\sqrt{\frac{2\pi}{\varepsilon}} \int_{\R + iC} \exp \left( -\frac{2\pi^2}{\varepsilon}  u^2 + \pi i u + \text{Li}_2 \left(x^{2}q^2 ; q^2 \right) \right) du. \notag
\end{align}
Here the contour is defined by a positive constant $C$ that will be specified later.  Note that $\text{Li}_2(x^{2}q^2; q^2)$ is (absolutely) convergent for all $u$ in the upper-half plane.

In order to continue the  analysis of the asymptotic behavior, we  require the Laurent expansion of the quantum dilogarithm. By (\ref{TaylorLi}), we have uniformly in $x$
\begin{equation}
\label{E:Li2Laurent}
\text{Li}_2 \left(x^{2}q^2; q^2\right) = \frac{1}{2\varepsilon} \text{Li}_2 \left( x^{2}\right) +\frac12 \log \left( 1- x^{2}\right) + O (\varepsilon).
\end{equation}

We are now prepared to apply the Saddle Point Method to \eqref{E:AInt}, and begin by defining the function
\begin{equation*}
f(u):= -2\pi^2 u^2 +\frac12 \text{Li}_2 \left( e^{4\pi i u} \right),
\end{equation*}
which is the leading Laurent coefficient ($\varepsilon^{-1}$-term) of the exponential argument in \eqref{E:AInt}.
Noting that
$$
\Li_2'(x) = - x^{-1} \log(1-x),
$$
we see that a critical point occurs when
\begin{equation*}
f'(u) =  2\pi i \left(2\pi i u - \log   \left( 1- e^{4\pi i u}\right) \right) = 0,
\end{equation*}
which has a unique solution in the upper half-plane given by $u = v := \frac{i}{2 \pi} \log\left(\phi\right)$, where $\phi := \frac{1 + \sqrt{5}}{2}$ is the golden ratio.  Equivalently, this point is also given by $e^{2 \pi i v} = \phi^{-1}.$  This also determines the most natural value for $C$ in \eqref{E:AInt}, namely $C = \frac{\log\left(\phi\right)}{2 \pi}$, so that the contour passes through the critical point.

At the critical point, we evaluate
\begin{equation}
\label{E:hCrit}
f(v)= \frac12 \log^2 (\phi)  +\frac12 \text{Li}_2 \left(\phi^{-2}  \right) = \frac{\pi^2}{30},
\end{equation}
using the special value (found in Section I.1 of \cite{Zag07})
\begin{equation*}
\text{Li}_2\left(\phi^{-2}\right)=\frac{\pi^2}{15}-\log^2(\phi).
\end{equation*}
We also need the value of the second derivative, so we calculate
\begin{equation*}
f''(u)
= -4\pi^2 \left( 1+\frac{2e^{4\pi i u}}{1-e^{4\pi i u}} \right).
\end{equation*}
Inserting the special value $v$,  we then find
$$
f''(v) = -4\pi^2 \left(\frac{1+\phi^{-2}}{1-\phi^{-2}} \right) = -4\pi^2 \sqrt{5}.
$$

Writing $u = v + \sqrt{\varepsilon}z$, then yields that the integrand in \eqref{E:AInt}  has the overall expansion
\begin{align*}
\exp & \left( \frac{f(u)}{\varepsilon} + \pi i u + \frac12 \log \left( 1-e^{4 \pi iu} \right)
+ O (\varepsilon) \right) \\
%& =\exp \left( \frac{1}{\varepsilon} \left( f(v) + f'(v)\sqrt{\varepsilon} z  +
%\frac{f''(v)}2 \varepsilon z^2 + O \left( \varepsilon^{\frac32} \right) \right) + \pi i v +\frac12 \log \left( 1- \phi^{-2} \right) + O\left(\varepsilon^{\frac12} \right) \right) \\
& = \phi^{-1} \exp \left( \frac{f(v)}{\varepsilon} + \frac{f''(v)}{2} z^2 +O \left( \varepsilon^{\frac12} \right)  \right)
= \phi^{-1} \exp\left(\frac{\pi^2}{30\varepsilon} + \frac{f''(v)z^2}{2} \right) \left( 1+O\left( \varepsilon^{\frac12} \right) \right).
\end{align*}
Inserting this into (\ref{E:AInt}) and changing variables gives  that
\begin{equation} \label{Aepsilon0}
\mathscr{A} \left(e^{-\varepsilon} \right)
=-\sqrt{2\pi} \phi^{-1} e^{\frac{\pi^2}{30\varepsilon}}
\left(1 + O\left(\varepsilon^{\frac{1}{2}}\right)\right)
 \int_{\R} e^{\frac{f''(v) z^2}{2}} dz.
\end{equation}
The (Gaussian) integral evaluates to
$$
\sqrt{\frac{2 \pi}{-f''(v)}} =\sqrt{\frac{1}{2\pi\sqrt{5}}},
$$
which implies that
\begin{equation}
\label{E:AAsymp}
\mathscr{A} \left(e^{-\varepsilon} \right)
 =-\frac{\phi^{-1}}{5^{\frac14}} e^{\frac{\pi^2}{30\varepsilon}} \left(1 + O \left( \varepsilon^{\frac12} \right) \right).
\end{equation}

Recalling the asymptotic formula \eqref{E:qinfty} and combining with \eqref{E:AAsymp}, we finally have
\begin{equation}
\label{E:CAsymp}
\mathscr{H} \left(e^{-\varepsilon} \right) \sim
  \frac{ \phi^{-1}}{\sqrt{2\pi} \, 5^{\frac14}} \varepsilon^{\frac12} e^{\frac{\pi^2}{5\varepsilon}}.
\end{equation}

In order to finish the proof of the first asymptotic formula in Theorem \ref{T:h}, we first apply Lemma \ref{L:mono} to \eqref{E:hgen} in order to conclude that $\mathscr{H}(q)$ has monotonically increasing coefficients (this can also be shown directly from the definition of shifted stacks). We can then apply Theorem \ref{T:Ingham} to \eqref{E:CAsymp}, with the constants $\lambda= \frac{\phi^{-1}}{\sqrt{2\pi} 5^{\frac14}}, \alpha=\frac12,$ and $ A = \frac{\pi^2}{5}.$  This finally gives the desired formula
$$
h(n) \sim \frac{\phi^{-1}}{2\sqrt{2} \, 5^{\frac34} n} \, e^{2\pi \sqrt{\frac{n}{5}}}.
$$

We conclude this section by discussing Zagier's classification of modular $q$-series.  If the series in \eqref{E:AConst} is re-expanded using the $q$-binomial theorem and \eqref{E:theta}, we obtain the formula
\begin{equation*}
\mathscr{H}(q) = 1 + \frac{1}{(q)_\infty} \sum_{m \geq 0} \frac{q^{(2m+1)(m+1)}}{(q^2; q^2)_m}.
\end{equation*}
This is not one of the seven examples given in Table 1 of \cite{Zag06}, and is therefore not modular.

\subsection{Shifted stacks with summits}
\label{S:shifted:summit}
We proceed as in \eqref{E:CConst}, with the only difference being some small shifts in the second sum. In particular, we express the generating function as
\begin{align*}
\mathscr{H}_s(q) &= \coeff \left[x^0\right] \left(\sum_{r\geq 0} \frac{x^{-r} q^{\frac{r(r-1)}{2}}}{(q)_r}
\sum_{m\geq 0} \frac{x^{m}q^m}{(q)_m}\right)
=1 + \coeff \left[x^0\right] \left(\left( -x^{-1}\right)_{\infty}  \left(xq\right)_\infty^{-1} \right),
\end{align*}
and thereby conclude the asymptotic formula
\begin{align}
\label{E:GsimC}
\mathscr{H}_s  \left(e^{-\varepsilon} \right)   & = 1 + \frac{q^{-\frac18}}{(q)_{\infty}} \sqrt{\frac{2\pi}{\varepsilon}} \int_{\R + iC} \exp \left(-\frac{2\pi^2 u^2}{\varepsilon} - \pi i u + \text{Li}_2 \left(x^{2}q^2;q^2 \right) \right) du \\
& \sim \phi \, \mathscr{H}  \left(e^{-\varepsilon} \right). \notag
\end{align}
The extra factor of $\phi$ arises from the fact that $e^{\pi i u}$ in the integrand of \eqref{E:AInt} has been replaced by $e^{- \pi i u}$, recalling further that the critical point is given by $e^{-2 \pi i v} = \phi$.
Arguing as before (using Lemma \ref{L:mono} on \eqref{E:hsgen} and applying Theorem \ref{T:Ingham}), we find a similar asymptotic relationship between the coefficients, concluding that
\begin{equation*}
h_s(n) \sim  \phi \cdot h (n).
\end{equation*}

\begin{remark*}
As described by Watson in \cite{Wat36} (see equation (C)), Ramanujan showed that
$$
\mathscr{H}_s(q) = (q)^{-1} G_{\frac{1}{2}}\left(q^2\right),
$$
where
\begin{equation*}
G_u(q) := \sum_{n \geq 0} \frac{q^{n(n+u)}}{(q)_n}.
\end{equation*}
The famous Rogers-Ramanujan identities \cite{RR19} state that if $u = 0$ or $1$, then
\begin{equation}
\label{E:RR}
G_u(q) = \frac{1}{\left(q^{1+r}; q^5\right)_\infty \left(q^{4-r}; q^5\right)_\infty}.
\end{equation} 
The right-side of \eqref{E:RR} is a modular function (up to a rational power of $q$) , and as such its asymptotic expansion near $q=1$ is well-understood. Under the assumption that $G_u(q)$ has an asymptotic expansion of the same shape for both integral and non-integral $u$, Ramanujan then recovered an expression equivalent to \eqref{E:GsimC}, although he did not offer any justification for this assumption.
\end{remark*}

\section{Stacks with summits}
\label{S:summit}

In this final section we briefly outline how Stanley's  and Wright's work in \cite{Stan11} and \cite{Wri71}, respectively, together imply the asymptotic formula for stacks with summits from Table \ref{Tab:stack}, namely that as $n \to \infty,$
\begin{equation}
\label{E:stackssummits}
ss(n) \sim \frac{1}{2^3 3^{\frac{3}{4}} n^{\frac{5}{4}}} e^{2\pi \sqrt{\frac{n}{3}}}.
\end{equation}
Specifically, Stanley provided a formula for the generating function of stacks with summits, using an inclusion-exclusion argument to prove Proposition 2.5.1 in \cite{Stan11}, which gives that
\begin{equation}
\label{E:SsStan}
\mathscr{S}_s(q) = \frac{1}{(q)_\infty^{2}} \big( 1 - L(q)\big).
\end{equation}
Here we have adopted Wright's notation for the ``false theta function'' (see equation (1.4) in \cite{Wri71})
\begin{equation*}
L(q) := \sum_{n \geq 1} (-1)^{n-1} q^{\frac{n(n+1)}{2}}.
\end{equation*}
Wright's own analysis of stacks in \cite{Wri71} began with the closely related expression
\begin{equation}
\label{E:SAul}
\mathscr{S}(q) = \frac{1}{(q)_\infty^{2}} L(q);
\end{equation}
which had previously appeared as equation (20) of \cite{Aul51}.  Wright's proof can then essentially be followed verbatim with $L(q)$ replaced by $1 - L(q)$, which leads to the asymptotic formula for stacks with summits.

\begin{remark*}
Although Stanley and Wright's arguments provide additional combinatorial and analytic information, we point out that \eqref{E:SsStan} also follows immediately from a specialization of Heine's $_2 \phi_1$ transformation; in particular, set $a = b = 0$ and $c = t = q$ in Corollary 2.3 of \cite{And98}.
\end{remark*}

It is perhaps worthwhile (particularly for non-expert readers) to further discuss why Wright's proof can be modified in this manner.  Wright's general argument follows the philosophy of the Circle Method, where the coefficients of a generating series are recovered by applying Cauchy's theorem, with the analysis further amplified by using asymptotic properties of the series near roots of unity.  In the case of modular forms (such as the partition function, up to a power of $q$), Hardy and Ramanujan \cite{HR} (and later Rademacher \cite{Rademacher}) used cuspidal expansions in order to derive exact formulas for the coefficients.  However, the presence of the non-modular $L(q)$ in \eqref{E:SAul} prevents a similarly powerful analysis for stacks.

Wright's modified approach instead has just one ``Major Arc'', around the exponential singularity at the cusp $q = 1$, and is useful as it can be applied to any series that can be expressed as the product of a weakly holomorphic modular form with poles at the cusps and a series with a convergent asymptotic expansion (e.g., a meromorphic Laurent series such as $L(q)$).  In particular, the terms in the asymptotic expansion of $s(n)$ can then be found from the asymptotic expansion of $L(q)$ near $q=1$, which is proven by Euler-MacLaurin summation in Lemma 1 of \cite{Wri71}.
If $q = e^{-\varepsilon}$ and $\varepsilon \to 0^+$, then the first term of Wright's expansion is stated as
\begin{equation}
\label{E:Lasymp}
L(q) = \frac{1}{2} + O(\varepsilon);
\end{equation}
in other words, $L(q) \sim 1 - L(q)$, so the asymptotic main terms for $s(n)$ and $ss(n)$ are identical.

\begin{remark*}
Alternatively, \eqref{E:stackssummits} may also be proven by applying Ingham's Tauberian Theorem, which may be viewed as an even more broadly applicable version of the Circle Method, with a correspondingly weaker conclusion that only provides the main asymptotic term.  Starting with \eqref{E:SsStan} and \eqref{E:Lasymp}, one can use \eqref{E:qinfty}, Theorem \ref{T:Ingham} and Lemma \ref{L:mono} from the present paper in order to conclude the claimed asymptotic formula for $ss(n)$.
\end{remark*}

\end{document}